\newcommand{\f}{\phi}
\newcommand{\cE}{\mathcal{E}}
\newcommand{\cF}{\mathcal{F}}
\newcommand{\cU}{\mathcal{U}}
\newcommand{\To}{\Rightarrow}
\newcommand{\vep}{\varepsilon}
\newtheorem{thm}{Theorem}
\newtheorem{lem}{Lemma}
\newtheorem{rem}{Remark}
\begin{document}

%\date{Draft 01/23/2013}
\date{}

\numberwithin{equation}{section}

\title{A stochastic model for the evolution of the influenza virus}

\bigskip
\author{J. Theodore Cox\thanks{ Supported in part by an NSF grant.}
\\ {\it Syracuse University} 
 \and
  Rinaldo B. Schinazi\thanks{ Supported in part by an NSA grant.}\\ {\it University of Colorado, Colorado Springs}}
\maketitle

{\bf Abstract.} Consider a birth and death chain to model
the number of types of a given virus. Each type gives birth
to a new type at rate $\lambda$ and dies at rate 1. Each
type is also assigned a fitness. When a death occurs either
the least fit type dies (with probability $1-r$) or we kill
a type at random (with probability $r$). We show that this
random killing has a large effect (for any $r>0$) on the
behavior of the model when $\lambda<1$. The
behavior of the model with $r>0$ and $\lambda<1$ is
consistent with features of the phylogenetic tree of
influenza.

\bigskip

{\bf Key words:} phylogenetic tree, influenza, stochastic model, mutation 

{\bf AMS Classification:} 60K35

\section{Introduction.}

%\note{General intro and discussion, including \cite{LS09}.} 
Consider the following model for the evolution of a virus. 
The model depends on two parameters, $\lambda>0$ and $r\in[0,1]$. We think of $\lambda$ as the mutation rate. 
The number of types at time $t$ is denoted by $X(t)$, 
a birth-death process which makes transitions
\[
n \to \begin{cases}
n+1 & \text{at rate }n\lambda \text{ for }n\ge 1\\
n-1 & \text{at rate }n \text{ for }n\ge 2
\end{cases}
\]
(the number of types is never less than one).  Each virus
type has a fitness $\f$, chosen at random from the uniform
(0,1) distribution when it is created (so each new type is
different from all previous types).  When a type dies the
type that is chosen to die is, with probability $r$,
selected uniformly among the existing types, and with
probability $1-r$ the type with minimal fitness. We will say
that with probability $r$ a random killing occurs.

The model with $r=0$ (the least fit type type is always
killed) was introduced by Liggett and Schinazi in \cite{LS09}.
Several articles have since been written on closely
related models, see \cite{BMR}, \cite{GMS1}, \cite{GMS2} and
\cite{MV}.  ``Kill the least fit'' models go back to at
least \cite{BK}. The model with random killing (i.e. $r>0$) is a natural
extension for at least two reasons. From a modeling perspective ``Kill
the least fit'' is quite natural. However, assuming that this is always the case
is not. Random events should occasionally prevent this
transition from happening.
Furthermore, from a mathematical perspective it seems interesting to study the effect
of small random perturbations of the basic model. As we will see  they can have 
major effects on the behavior of the model.

We are interested in 
\begin{align*}
\f_t&=\f_t^r= \text{ the maximal fitness of the types alive at time
  $t$}, \\
a_t&= a^r_t=\text{ the age of the type with maximal fitness at time $t$}
\end{align*}
(if a type is created at time $s$ then its age at time $t>s$
is $t-s$). We start the process with a single individual. 
We assume that its fitness $\f_0$ is
uniformly distributed on (0,1), and initially we take
$a_0=0$. 

Let $\To$ denote weak convergence and $\to_p$ denote
convergence in probability. The following theorem summarizes
the main results of \cite{LS09}.

\begin{thm}[\cite{LS09}] Assume $r=0$, and $Y$ is uniformly
  distributed on the interval (0,1).
\begin{enumerate}[{(a)}]
\item If $\lambda \le 1$ then $a_t/t \To Y$
as $t\to\infty$. 
\item If $\lambda >1$ then $a_t/t \to_p 0$ as $t\to\infty$.
\end{enumerate}
\end{thm}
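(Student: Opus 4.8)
The plan is to first extract the purely combinatorial content hidden in the ``kill the least fit'' rule, and only afterwards feed in the growth rate of the birth clock. Since $r=0$, a death always removes the currently least fit type, so the most fit living type can be removed only when it is simultaneously the least fit one, i.e.\ only when the population size equals $1$; but no death occurs at state $1$. Hence the maximal fitness type never dies, $\f_t$ is nondecreasing, and $\f_t$ equals the maximum of the fitnesses of \emph{all} types ever created by time $t$. In particular the current leader was already the leader at the moment of its own birth, so its age satisfies $a_t=t-\tau_t$, where $\tau_t$ is the birth time of the last fitness record before $t$. Writing $B_t$ for the number of births in $(0,t]$ (so that $B_t+1$ types have appeared, counting the initial one), the fitnesses are i.i.d.\ uniform and independent of the birth times, so the position of the overall maximum is uniform on $\{0,\dots,B_t\}$ and independent of $(B_s)_{s\le t}$. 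This yields the identity
\[
P(a_t>s)=E[(B_{t-s}+1)/(B_t+1)],
\]
and, setting $s=yt$ and $x=1-y$, the entire theorem reduces to the asymptotics of $E[(B_{xt}+1)/(B_t+1)]$.

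The two non-critical regimes are then immediate. For $\lambda<1$ the chain $X$ is positive recurrent with some stationary mean $\bar X$; since births occur at rate $\lambda X(t)$, the ergodic theorem gives $B_t/t\to\lambda\bar X$ almost surely, hence $(B_{xt}+1)/(B_t+1)\to x$ a.s., and bounded convergence yields $E[(B_{xt}+1)/(B_t+1)]\to x$, i.e.\ $P(a_t/t>y)\to 1-y$, which is exactly $a_t/t\To Y$. For $\lambda>1$ the chain never dies (reflection at $1$) and is transient, so the standard branching martingale argument gives $B_t e^{-(\lambda-1)t}\to W$ with $W>0$ a.s.; then for fixed $y>0$ one has $(B_{(1-y)t}+1)/(B_t+1)\sim e^{-(\lambda-1)yt}\to 0$ boundedly, so $P(a_t/t>y)\to 0$ for every $y>0$, i.e.\ $a_t/t\to_p 0$.

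The critical case $\lambda=1$ is where the difficulty lies. Here $X$ is null recurrent, $B_t/t$ does not converge, and the integrated intensity $\int_0^t X(s)\,ds$ is dominated by its largest excursions, so $(B_{xt}+1)/(B_t+1)$ does \emph{not} converge in probability to $x$; the clean argument of the previous paragraph breaks down entirely. The point to exploit is that the identity above needs only the \emph{mean} $E[(B_{xt}+1)/(B_t+1)]\to x$, which can survive even when the ratio itself remains genuinely random in the limit. The natural route is through the critical scaling of the birth--death chain: with equal per-capita up and down rates, $X$ behaves diffusively with variance rate proportional to its size, suggesting a self-similar (index $1$) Feller-type limit $Z$, and since $B_t\sim\int_0^t X$ one expects $(B_{xt}+1)/(B_t+1)\To \int_0^x Z_u\,du\big/\int_0^1 Z_u\,du$. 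The theorem for $\lambda=1$ then becomes the assertion that this area-biased random time on $[0,1]$ is uniform, equivalently that its mean equals $x$.

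The main obstacle is precisely this last step. Controlling the heavy-tailed excursion areas of the reflected critical chain tightly enough to justify passing to the limit, and then verifying the mean identity $E[\int_0^x Z/\int_0^1 Z]=x$, is the delicate part; the reflection at $1$ and the single-ancestor start make the naive scaling limit degenerate, so the diffusion picture must be handled with care. I would therefore attempt $\lambda=1$ either by a direct second-moment/martingale computation on $B_t$ at criticality that establishes $E[(B_{xt}+1)/(B_t+1)]\to x$ without invoking the scaling limit, or by isolating a time-reversal symmetry of the critical area process that forces the area-biased time to be uniform. Everything outside this one identity is routine given the structural reduction.
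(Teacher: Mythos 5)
Your structural reduction is correct, and it is essentially the mechanism behind the original proof in \cite{LS09}: with $r=0$ the fittest type can never be killed (it would have to be simultaneously the least fit, which happens only in state $1$, where no deaths occur), so $\f_t$ is the running maximum of the i.i.d.\ uniforms, and conditioning on the birth process yields the ratio identity $P(a_t>s)=E[(B_{t-s}+1)/(B_t+1)]$ --- the same identity that underlies the bounds (1)--(2) of \cite{LS09}, quoted as \eqref{LS12} in Section 2 of this paper. Your treatment of $\lambda<1$ (positive recurrence, $\pi_n\propto\lambda^{n-1}/n$ has finite mean, hence $B_t/t\to\lambda\bar X$ a.s.\ and bounded convergence) is correct. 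Your treatment of $\lambda>1$ is correct in outline; the one quibble is that $X$ is not literally a branching process because of the reflection at $1$, so the claim $B_te^{-(\lambda-1)t}\to W>0$ a.s.\ needs an argument of the type given by Lemma 1 of this paper combined with $S_n/n\to$ a positive limit, rather than a bare appeal to ``the standard branching martingale.''

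The genuine gap is the critical case: part (a) asserts the result for all $\lambda\le 1$, and you prove nothing at $\lambda=1$. Your diagnosis of why the easy route fails there is accurate --- the chain is null recurrent, $\int_0^tX_s\,ds$ is dominated by the heavy-tailed areas of its largest excursions, and $B_{xt}/B_t$ does not concentrate at $x$ --- and you correctly flag this as the main obstacle, but what you offer in its place (``a direct second-moment/martingale computation'' or ``a time-reversal symmetry of the critical area process'') is a list of candidate attacks, not an argument; neither is carried out, and it is not evident that either goes through without substantial work. Note that by your own reduction, the assertion $E[(B_{xt}+1)/(B_t+1)]\to x$ is \emph{equivalent} to the theorem at $\lambda=1$, so your proof is missing exactly where the statement has content beyond the law of large numbers. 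Since the present paper only cites Theorem 1 from \cite{LS09} and contains no proof of it, there is no in-text argument to compare against or to borrow from; to complete the proof you would need to carry out one of your proposed attacks in detail, or follow the critical-case argument of \cite{LS09} itself.
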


When $\lambda<1$, $X(t)$ converges in distribution to its
stationary distribution, and hence at any given time there
will not be many  types. In this case, (a) above shows that 
the fittest type at time $t$ will have been around for order
of time $t$. As noted in \cite{LS09}, this is consistent
with the observed structure of an influenza tree. When
$\lambda>1$, $X(t)$ tends to infinity as $t\to\infty$, and
(b) shows that the fittest type at time $t$ has been around
only for only $o(t)$ time. As noted in \cite{LS09},  this is
consistent with the observed structure of an HIV tree. In
the critical case $\lambda=1$ we have something inbetween
these two pictures. It is easy to see that in all cases the maximal fitness
$\f_t\to 1$ as $t\to\infty$.

Theorem~1 shows that the model with $r=0$ can, by
adjusting $\lambda$, describe rather different
evolutions. Nevertheless, it has some limitations. The
maximal fitness always tends to 1, and for $\lambda\le1$ the age
$a_t$ tends to infinity. As shown below, the model with
random killing ($r>0$) allows for the possibilities that
$\f_t\not\to 1$ and $a_t\not\to\infty$. 

Before proceeding to our results for the $r>0$ case we
resolve one question left open by Theorem~1. Namely, (b)
leaves open the two possibilities: $a_t$ is (stochastically)
bounded as $t\to\infty$, or $a_t\to\infty$. It turns out
that $a_t$ does not tend to infinity, instead it 
converges in distribution. For the sake of
completeness, we include the behavior of the maximal fitness
in the following result.

\begin{thm} Assume $r=0$, and let $\cE$ be a mean one exponential
  random variable.
\begin{enumerate}[(a)]
\item For $\lambda>0$, $\phi_t\to 1$ a.s.\ as
$t\to\infty$. 
\item For $\lambda >1$, $a_t\To \frac1{\lambda-1}\cE$ as
$t\to\infty$.
\end{enumerate}
\end{thm}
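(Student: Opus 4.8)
The plan is to exploit the special structure of the $r=0$ dynamics. Since each death removes the least fit type, the type currently carrying the maximal fitness is never the one killed: it is the minimum only when it is the unique type, and then no death can occur, because deaths happen at rate $n$ only for $n\ge 2$. Consequently the record-holding type is immortal until a fitter type is born, so $\f_t$ is exactly the running maximum of $\f_0$ together with the i.i.d.\ uniform fitnesses $U_1,U_2,\dots$ assigned at the successive birth events; in particular $\f_t$ is nondecreasing. For part (a), the birth rate $n\lambda$ is bounded below by $\lambda>0$ for all $n\ge1$ and the linear birth–death chain is non-explosive, so the number $N(t)$ of births in $[0,t]$ tends to infinity a.s. Writing $\f_t=\max\{\f_0,U_1,\dots,U_{N(t)}\}$, the maximum of infinitely many independent uniforms converges to $1$ a.s., which is the whole of (a).

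For part (b) I would first identify the age. The maximal-fitness type alive at time $t$ is precisely the global record holder realizing $\max\{\f_0,U_1,\dots,U_{N(t)}\}$, and it is alive; hence $a_t=t-\tau_t$, where $\tau_t$ is its birth time. The crucial structural point is that the counting process $N(\cdot)$ and the birth times depend only on the birth/death clocks, whose rates are functions of $X$ alone, and are therefore independent of the fitness marks $(U_k)$. By exchangeability of i.i.d.\ continuous marks, conditional on the birth times the index of the maximum is uniform over the types created by time $t$. Hence, with $N_2$ the number of births in $(t-x,t]$ and $N=N(t)$,
\[
P(a_t\le x)=P(\tau_t>t-x)=E\!\left[\frac{N_2}{N+1}\right].
\]

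For the asymptotics (now using $\lambda>1$) I would invoke the standard theory of supercritical linear birth–death processes: apart from the negligible reflection at the boundary state $1$, $X$ is a branching process with Malthusian parameter $\lambda-1$, and $e^{-(\lambda-1)t}X(t)$ converges almost surely to a strictly positive limit $W$. Since births up to time $t$ are compensated by $\lambda\int_0^t X(s)\,ds\sim \tfrac{\lambda W}{\lambda-1}e^{(\lambda-1)t}$, a law of large numbers for the birth-counting process gives $e^{-(\lambda-1)t}N(t)\to \tfrac{\lambda W}{\lambda-1}$ a.s., and the same applied to the window $(t-x,t]$ yields
\[
\frac{N_2}{N}=\frac{N(t)-N(t-x)}{N(t)}\longrightarrow 1-e^{-(\lambda-1)x}\quad\text{a.s.,}
\]
the common factor $W$ cancelling. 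As $0\le N_2/(N+1)\le1$, bounded convergence then gives $P(a_t\le x)\to1-e^{-(\lambda-1)x}$, the distribution function of $\tfrac1{\lambda-1}\cE$, which is the claim.

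The main obstacle is this last step: converting the martingale-type limit $e^{-(\lambda-1)t}X(t)\to W$ into the required almost-sure convergence of the ratio $N_2/N$. One must control the gap between the counting process $N$ and its compensator $\lambda\int X\,ds$ (via the associated martingale and an $L^2$ or Borel–Cantelli estimate), verify that the \emph{same} limit $W$ governs $N(t)$ and $N(t-x)$ so that it cancels, and ensure the compensator asymptotics hold on the full-probability event, taking the boundary state $1$ into account as a lower-order correction. Everything upstream — the immortality of the record holder, the running-maximum representation of $\f_t$, and the independence of the marks from the clocks — is elementary.
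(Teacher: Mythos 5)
Your argument is correct, and its overall skeleton coincides with the paper's: part (a) is proved identically (the record holder is never killed when $r=0$, so $\phi_t$ is the running maximum of the i.i.d.\ uniforms, and the number of births tends to infinity), and for part (b) both proofs rest on the same exchangeability observation — conditionally on the birth--death trajectory, the index of the record among the i.i.d.\ marks is uniform, so $P(a_t\le x)$ is the expected fraction of births landing in the window $(t-x,t]$ — followed by bounded convergence once the birth count is shown to grow a.s.\ like $(\text{positive r.v.})\times e^{(\lambda-1)t}$. Where you genuinely diverge is in proving that growth estimate, which is where the paper's only new technical content lies. The paper works with the hitting times $T_n$ of level $n$ and the birth counts $S_n$ up to $T_n$, importing from \cite{LS09} that $T_n-\frac{\log n}{\lambda-1}\to\zeta_\infty$ a.s.\ and that $S_n/n$ converges a.s.\ to a positive finite limit; its Lemma~1 upgrades the former to $N(t)e^{-(\lambda-1)t}\to e^{-(\lambda-1)\zeta_\infty}$ a.s., and the sandwich \eqref{LS12} (inequalities (1)--(2) of \cite{LS09}) then gives the exponential limit. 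You instead invoke the martingale limit $e^{-(\lambda-1)t}X(t)\to W>0$ and a law of large numbers for the birth-counting process against its compensator $\lambda\int_0^t X(s)\,ds$. This route is sound and more self-contained, and the two gaps you flag are both standard to fill: the reflection at state $1$ is handled by noting that for $\lambda>1$ the chain returns to $1$ only finitely often a.s.\ (above state $1$ the embedded walk steps up with probability $\lambda/(1+\lambda)>1/2$), after which the path is a supercritical branching excursion that survives forever, so the limit $W$ is a.s.\ strictly positive; and the counting LLN, which you call the main obstacle, needs no delicate martingale or Borel--Cantelli estimate — by the time-change representation one can write $N(t)=\Pi\bigl(\lambda\int_0^t X(s)\,ds\bigr)$ for a unit-rate Poisson process $\Pi$, and the strong law for $\Pi$ gives $N(t)\big/\lambda\int_0^t X(s)\,ds\to 1$ a.s.\ at once. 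In short, the paper buys brevity by leaning on \cite{LS09}; your version re-derives the growth rate from standard branching and counting-process theory at the cost of these two routine verifications.
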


We turn to the case of random killings ($r>0$) and focus on
the $\lambda<1$ case. We see that the behaviors of the
maximal fitness and age processes are quite different 
from the $r=0$ case.

\begin{thm} Assume $r>0$ and $\lambda<1$. Then 
\begin{enumerate}[(a)]
\item $\f_t$ converges in distribution as
  $t\to\infty$ to a nondegenerate limit law, and
\item $a_t$ converges in distribution as
  $t\to\infty$ to a nondegenerate limit law.
\end{enumerate}
\end{thm}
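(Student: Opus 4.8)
The plan is to realize the pair $(\f_t,a_t)$ as a measurable functional of a Markov process and to prove that this Markov process converges in distribution to a unique stationary law. Because fitnesses are fixed at birth and ages increase deterministically at unit rate, the natural state is the configuration $\eta_t$ recording, for each type alive at time $t$, its fitness and its age; this is a piecewise-deterministic Markov process on finite point configurations in $(0,1)\times[0,\infty)$, with births adding a point $(U,0)$ ($U$ uniform), and deaths removing either the least-fit point (with probability $1-r$) or a uniformly chosen point (with probability $r$). Both $\f_t$ (the largest fitness coordinate) and $a_t$ (the age attached to it) are functionals of $\eta_t$, so it suffices to prove $\eta_t\To\nu$ for a unique invariant law $\nu$ and then read off (a) and (b) by the continuous mapping theorem.

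First I would record the easy structural facts. Since $\lambda<1$, the number-of-types chain $X(t)$ is a positive recurrent birth-death chain with explicit reversible stationary distribution $\pi_n\propto \lambda^{n-1}/n$; in particular $X(t)$ is tight and returns to state $1$ infinitely often. The only genuinely new ingredient for tightness of $\eta_t$ is control of the ages, and here $r>0$ is decisive: whenever $X\ge 2$ every living type, whatever its fitness, is removed by a random killing at rate exactly $r$ (a death occurs at rate $X$, is a random killing with probability $r$, and then hits a given type with probability $1/X$). Writing $L_t=\int_0^t \mathbf 1\{X(s)\ge 2\}\,ds$, a type born at time $s$ and alive at $t$ must have avoided random killing, an event of conditional probability $e^{-r(L_t-L_s)}$. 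Since $X$ is ergodic with $\mathbb P_\pi(X\ge 2)>0$, one has $L_t-L_s\ge c\,(t-s)$ outside an event of exponentially small probability, so the expected number of types of age exceeding $K$ at time $t$ is bounded, uniformly in $t$, by a quantity tending to $0$ as $K\to\infty$. This gives tightness of $\{\eta_t\}$, and with the Feller property of the dynamics, existence of an invariant probability measure $\nu$ by Krylov--Bogolyubov.

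The heart of the argument is uniqueness of $\nu$ together with convergence $\eta_t\To\nu$, which I would obtain by a coalescence coupling (equivalently, a Harris/Doeblin minorization on the single-type configurations). Run two copies $\eta_t,\eta'_t$ from arbitrary initial states. Because each copy spends a positive fraction of time in the state $X=1$, there are infinitely many intervals on which both copies have a single type, say with fitnesses $f$ and $f'$. On such an overlap I would attempt the coupled maneuver: a single birth, coupled to add the \emph{same} uniform fitness $U$ to both copies, followed by a single death that is a random killing whose uniformly chosen target is coupled to be the pre-existing type in each copy; the result is that both copies are left with a single type of fitness $U$ and identical age, i.e.\ $\eta_t=\eta'_t$, after which the copies are driven identically and stay equal forever. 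Each attempt succeeds with a probability bounded below by a positive constant depending only on $\lambda$ and $r$ (this uses only $r>0$, so it covers $r=1$ as well), and the attempts recur infinitely often, so the coupling is successful a.s. This yields $\|\,\mathrm{law}(\eta_t)-\nu\,\|_{TV}\to 0$ from every initial condition, hence the unique limit law.

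Finally, (a) and (b) follow by applying the continuous mapping theorem to the (a.s.\ $\nu$-continuous) functionals ``largest fitness'' and ``age of the largest-fitness type,'' and nondegeneracy is checked by exhibiting two separated, positive-$\nu$-probability events for each: for $\f_\infty$, a collapsed low-fitness single type (giving $\f_\infty$ small) versus the typical presence of a high-fitness type (giving $\f_\infty$ near $1$); for $a_\infty$, a freshly set record (age near $0$) versus an inherited older maximal type (age bounded away from $0$). I expect the main obstacle to be the convergence step for the infinite-dimensional process: making the coalescence maneuver and, especially, the claim that simultaneous single-type overlaps occur infinitely often rigorous requires the uniform age control above, so that the functionals are genuinely continuous and the state cannot escape to configurations of unboundedly old types, together with care in setting up independent retries of the maneuver. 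The age-tightness estimate is the technical linchpin on which both existence and uniqueness rest.
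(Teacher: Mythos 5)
Your proposal is correct in outline, but it reaches Theorem~3 by a genuinely different route than the paper. Both arguments pivot on the same elementary event---from a single-type state, a birth followed by a death in which the incumbent type is removed by random killing, which has probability bounded below by $\tfrac{r}{2(1+\lambda)}$---but they exploit it with different machinery. The paper treats the return times to state $1$ at which this event has just occurred as regeneration times $R_n$: at such times the surviving type has uniform fitness and exponential age, independent of the past. It then proves $P(R_1>t)\le Ce^{-\gamma t}$ (Gronwall plus comparison with a subcritical branching process), writes renewal equations for $P(\f_t\le v)$ and $P(a_t\le x)$, checks direct Riemann integrability of the pre-$R_1$ terms, and applies the key renewal theorem; this yields explicit representations of the limits, e.g.\ $\lim_t P(\f_t\le v)=\tfrac1\mu\int_0^\infty P(\f_s\le v,\,R_1>s)\,ds$, from which nondegeneracy is immediate. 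You instead view the full fitness--age configuration as a Markov process and use the very same event as a coalescence opportunity for two coupled copies, obtaining total variation convergence to a unique invariant law---in fact from arbitrary initial configurations, which is more than the paper establishes---but with no formula for the limit. The trade-off is the infrastructure your route requires, and here there are three repairable soft spots: (i) your age-tightness claim is stated too strongly, since conditionally on the $X$-path the survival probability against random killing is $\prod_i(1-r/k_i)$ over down-jumps, not exactly $e^{-r(L_t-L_s)}$; making it rigorous needs the exponential martingale for the thinned killing point process together with a large-deviation lower bound on $\int_s^t \mathbf{1}\{X(u)\ge 2\}\,du$; (ii) the Feller property needed for Krylov--Bogolyubov is delicate because least-fit deletion is discontinuous at fitness ties---though this is avoidable, since your coupling already shows $\mathrm{law}(\eta_t)$ is Cauchy in total variation, which gives existence of the limit without Krylov--Bogolyubov and, in fact, without age tightness at all; and (iii) once you have total variation convergence, the continuous mapping theorem is superfluous: measurability of the max-fitness and age functionals suffices. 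None of these is a fatal gap; they are the points where your sketch must be tightened, and your overstatement that tightness underpins uniqueness should be dropped (the coalescence argument never uses the ages).
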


Theorem 3 is consistent with features of the influenza phylogenetic
tree.  The most fit type lasts a finite random time and then
is replaced by a new most fit type and so on. As desired
$a_t$ does not go to infinity with $t$ and $\f_t$ does not
go to 1. Instead they converge to nondegenerate limits. 

Turning to the $r>0,\lambda>1$ case, our results are
less complete. We can show that the fitness $\phi_t$ tends to one
as $t\to\infty$, but we cannot show, as we conjecture, that the age
$a_t$ does not tend to infinity.

\begin{thm}
For $r>0$ and $\lambda\ge1$, $\phi_t\to_p1$ as $t\to\infty$.
\end{thm}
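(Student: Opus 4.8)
The plan is to show that for every $\vep\in(0,1)$ one has $P(\f_t\le 1-\vep)\to0$, which is equivalent to the asserted $\f_t\to_p1$. The first ingredient is that the population size tends to infinity in probability for every $\lambda\ge1$: when $\lambda>1$ the reflected birth--death chain $X$ is supercritical and $X(t)\to\infty$ a.s., while when $\lambda=1$ the chain is null recurrent, so $P(X(t)=k)\to0$ for each fixed $k$ and hence $P(X(t)\le K)\to0$ for every $K$. Thus in both regimes $X(t)\to_p\infty$. The second, more important, ingredient is a bookkeeping of how high-fitness types are born and killed. Call a type \emph{good} if its fitness exceeds $1-\vep$, and let $G(s)$ be the number of good types alive at time $s$; note $\f_s>1-\vep$ as soon as $G(s)\ge1$. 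Good types are created at rate $X(s)\lambda\vep$ (each of the $X(s)\lambda$ births is good with probability $\vep$). A good type is removed by a random killing at per-capita rate $r$ (a death occurs at rate $X$, is random with probability $r$, and then selects a given good type with probability $1/X$, giving rate $X\cdot r\cdot(G/X)=rG$), and it is removed by the ``kill the least fit'' rule only if it is the globally minimal type, which cannot happen while at least one non-good type is alive, i.e.\ while $G(s)<X(s)$.

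I would then run a localized argument on a window $[t-M,t]$ of fixed length $M$. Fix a large threshold $K$. By the first ingredient $P(X(t-M)\ge K)\to1$ as $t\to\infty$, and starting from a value $\ge K$ a birth--death chain stays above $K/2$ throughout a fixed-length window with probability at least $1-\rho(K)$, where $\rho(K)\to0$ as $K\to\infty$ (concentration of $X$ over a bounded interval when started from a large value). On this event non-good types are born at rate at least $(1-\vep)(K/2)$, so at least one non-good type is present throughout the window, which forces $G(s)<X(s)$ and hence shuts off the ``kill the least fit'' removal of good types. Consequently, on this good event the process $G$ has births at rate at least $(K/2)\lambda\vep$ and deaths only through random killing at per-capita rate $r$, so $G$ dominates an immigration--death ($M/M/\infty$) chain $Z$ with immigration rate $a=(K/2)\lambda\vep$ and death rate $r$, started at $Z(t-M)=0$. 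For such a chain $P(Z(t)=0)=\exp\!\big(-\tfrac{a}{r}(1-e^{-rM})\big)$.

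Putting the pieces together, for fixed $K$ and $M$,
\[
\limsup_{t\to\infty}P(\f_t\le1-\vep)\le \rho(K)+\exp\!\Big(-\tfrac{(K/2)\lambda\vep}{r}\,(1-e^{-rM})\Big),
\]
and letting first $M\to\infty$ and then $K\to\infty$ makes the right-hand side vanish, giving $P(\f_t\le1-\vep)\to0$ for each $\vep$ and hence $\f_t\to_p1$.

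The step I expect to be the main obstacle is controlling the interaction between the good subpopulation and the ``kill the least fit'' rule: a priori this deterministic rule could conspire to remove the high-fitness types, and the whole argument rests on the observation that a good type is never the least fit as long as some lower type survives, together with the guarantee---needed uniformly over the window---that lower types do persist because they keep being born at a rate proportional to $X$. Making the two limiting operations ($t\to\infty$, then $K\to\infty$) interact correctly with the window concentration estimate for $X$, and in the critical case with the merely-in-probability statement $X(t)\to_p\infty$, is the delicate bookkeeping the proof must handle with care.
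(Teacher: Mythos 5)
Your overall strategy---count the \emph{good} types (fitness above $1-\vep$), note that they are created at rate $\lambda\vep X(s)$, and dominate their number from below by an immigration--death process on a time window where $X$ stays large---is genuinely different from the paper's proof and can be made to work. However, the step you yourself flag as the crux is false as stated. You claim that, because non-good types are born at rate $(1-\vep)\lambda X(s)$, at least one non-good type is present throughout the window, so that the ``kill the least fit'' rule never touches a good type and good types die only by random killing at per-capita rate $r$. But the least-fit rule removes a non-good type at rate $(1-r)X(s)$ whenever one exists (the minimum is non-good as long as some non-good type is alive). If $(1-\vep)\lambda<1-r$ (for instance $\lambda=1$, $\vep=1/2$, $r$ small), the non-good subpopulation is a heavily loaded-against, queue-like process: its death rate exceeds its birth rate by a constant factor, uniformly in $X$, so it empties out repeatedly during any fixed window no matter how large $K$ is. During those stretches every living type is good, the least-fit rule kills good types, and the per-capita death rate of good types is $1$, not $r$; your claimed domination by the rate-$r$ $M/M/\infty$ chain therefore fails exactly in the regime where the population is most fit.

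The gap is repairable without changing your architecture. Whatever the configuration, the total rate at which good types die is $rG(s)+(1-r)X(s)\mathbf{1}\{G(s)=X(s)\}\le G(s)$, because when all types are good the total death rate is $X(s)=G(s)$; that is, the per-capita death rate of good types is always at most $1$. So on the event that $X(s)\ge K/2$ for all $s\in[t-M,t]$, the process $G$ dominates an immigration--death chain with immigration rate $a=(K/2)\lambda\vep$ and per-capita death rate $1$ started at $0$, giving $P(\f_t\le 1-\vep)\le \rho(K)+\exp\bigl(-a(1-e^{-M})\bigr)$, which still vanishes as $K\to\infty$ (even with $M$ fixed). With this correction your proof goes through, modulo writing out the window concentration estimate $\rho(K)$, which is standard for $\lambda\ge1$. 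For comparison, the paper argues quite differently: it first treats $r=1$, where conditionally on $X_t=k$ the fitnesses are exactly $k$ iid uniforms, so $P(\f^1_t\le u\mid X_t=k)=u^k$ and $P(X_t\le K)\to0$ finishes that case; it then shows $\f^r_t$ stochastically dominates $\f^1_t$ via an ordered-set coupling lemma (deleting an arbitrary element of the smaller set while deleting the minimum of the larger set preserves the coordinatewise order). The paper's route needs no quantitative control of $X$ beyond $P(X_t\le K)\to0$; yours trades the coupling lemma for concentration estimates on $X$ over a window.
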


% Our results show that adding random killings with
% probability $r$ has a drastic effect on the model when
% $\lambda<1$. It also yields a better model for the evolution
% of the influenza virus. In the next section we will
% introduce additional notation which will allow us to
% describe the above limit laws.

In the next section we give the proof of Theorem~2. In
Section 3 we give a construction that we use to prove
Theorem~3. The construction allows us to write down a
renewal type description of the limit laws for both the
fitness and age processes. In Section 4 we use a different
construction to prove Theorem~4. 

\section{Proof of Theorem~2}
Let us dispense with the easy convergence $\phi_t\to 1$.
Let $B(t)$ be the number of types
created by time $t$, and let $\cU_1,\cU_2,\dots$ be the successive
iid uniform $(0,1)$ random variables created as the process evolves.
Then $\f_t=\max\{\cU_1,\dots,\cU _{B(t)}\}$. It is easy to
see that $\max\{\cU_1,\dots,\cU_n\}\to 1$ a.s.\
as $n\to\infty$. Since $B(t)\to\infty$ a.s.\ we get $\f_t\to 1$ a.s.

For (b), fix $\lambda>1$ and  recall the notation of Section~3 of
\cite{LS09}. Following the notation there, let $T_n$ be the
first time $X_t$ reaches $n$, let $N(t)=\sup\{n: T_n\le
t\}$, and set
\[
\zeta(n)=T_n - \dfrac{\log n}{\lambda-1} .
\]
We need an
improvement of Lemma~1 of \cite{LS09}. 

\begin{lem} With probability one,
  $\lim_{t\to\infty}N(t)e^{-(\lambda-1)t}
= e^{-(\lambda -1)\zeta_\infty}$, a strictly positive finite
limit.
\end{lem}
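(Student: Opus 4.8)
The plan is to first identify $N(t)$ with the running maximum $\max_{s\le t}X(s)$ and then transfer the known control of the first-passage times $T_n$ to the continuous-time quantity $N(t)e^{-(\lambda-1)t}$. Because $X$ moves by $\pm1$, its first visit to level $n$ at time $T_n$ gives $N(T_n)=n$, and $N$ then stays equal to $n$ throughout $[T_n,T_{n+1})$; thus $N$ is a step function that increases by one at each $T_n$. It therefore suffices to control $N(t)e^{-(\lambda-1)t}$ at the times $T_n$ and to interpolate. At $t=T_n$, the definition $\zeta(n)=T_n-\frac{\log n}{\lambda-1}$ gives $N(T_n)e^{-(\lambda-1)T_n}=n\,e^{-(\lambda-1)T_n}=e^{-(\lambda-1)\zeta(n)}$, so everything reduces to the almost sure convergence $\zeta(n)\to\zeta_\infty$ to a finite limit.

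For that convergence I would write $T_n=\sum_{k=1}^{n-1}\sigma_k$ with $\sigma_k=T_{k+1}-T_k$ the first-passage time from $k$ to $k+1$. By the strong Markov property at each $T_k$ (where $X=k$ deterministically) the $\sigma_k$ are \emph{independent}, though not identically distributed. A one-step analysis on the rates $(k\lambda,k)$ yields $h_k:=E[\sigma_k]$ satisfying $\lambda h_k=\frac1k+h_{k-1}$, so $h_k=\lambda^{-(k+1)}\sum_{j=1}^k \lambda^j/j=\frac1{(\lambda-1)k}+O(k^{-2})$. Writing $\zeta(n)=\sum_{k=1}^{n-1}(\sigma_k-h_k)+\big(\sum_{k=1}^{n-1}h_k-\frac{\log n}{\lambda-1}\big)$, the deterministic bracket converges (the correction $h_k-\frac1{(\lambda-1)k}$ is summable), and the first sum is a series of independent, centered terms. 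The main obstacle is the variance estimate $\sum_k\operatorname{Var}(\sigma_k)<\infty$, which I would get from the companion recursion $v_k=\frac1\lambda v_{k-1}+O(k^{-2})$ for $v_k:=E[\sigma_k^2]$ (same one-step decomposition, using independence of the sub-excursions); since $\lambda>1$ this forces $v_k=O(k^{-2})$, hence $\operatorname{Var}(\sigma_k)=O(k^{-2})$. Kolmogorov's convergence criterion then gives a.s.\ convergence of the centered series, so $\zeta(n)\to\zeta_\infty$ a.s.\ with $\zeta_\infty$ finite. This a.s.\ statement (rather than a merely distributional one) is the improvement over Lemma~1 of \cite{LS09}.

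Finally I would interpolate. On $[T_n,T_{n+1})$ we have $N(t)e^{-(\lambda-1)t}=n\,e^{-(\lambda-1)t}$, decreasing in $t$ since $\lambda>1$; hence it lies between its left-endpoint value $e^{-(\lambda-1)\zeta(n)}$ and its right-endpoint limit $\frac{n}{n+1}e^{-(\lambda-1)\zeta(n+1)}$. As $n\to\infty$ both $\zeta(n)$ and $\zeta(n+1)$ tend to $\zeta_\infty$ and $\frac{n}{n+1}\to1$, so both bounds converge to $e^{-(\lambda-1)\zeta_\infty}$. The sandwich yields $N(t)e^{-(\lambda-1)t}\to e^{-(\lambda-1)\zeta_\infty}$ a.s., and since $\zeta_\infty$ is a.s.\ finite the limit is strictly positive and finite, as claimed.
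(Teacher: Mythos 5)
Your proposal is correct, and its concluding step is in substance identical to the paper's: the paper also reduces everything to the a.s.\ convergence $\zeta(n)\to\zeta_\infty$ and then sandwiches $N(t)e^{-(\lambda-1)t}$ between $e^{-(\lambda-1)\zeta(N(t))}$ and (essentially) $\tfrac{N(t)}{N(t)+1}e^{-(\lambda-1)\zeta(N(t)+1)}$ using $T_{N(t)}\le t<T_{N(t)+1}$, exactly your monotone interpolation on $[T_n,T_{n+1})$. Where you genuinely depart from the paper is in the first half: the paper does not prove $\zeta(n)\to\zeta_\infty$ at all, but imports it from the end of the proof of Lemma~1 of \cite{LS09}, whereas you derive it from scratch by writing $T_n=\sum_{k<n}\sigma_k$ as a sum of independent level-passage times, extracting $E[\sigma_k]=\frac{1}{(\lambda-1)k}+O(k^{-2})$ and $\operatorname{Var}(\sigma_k)=O(k^{-2})$ from one-step recursions, and invoking Kolmogorov's convergence criterion. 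This buys a self-contained proof (and an explicit description of $\zeta_\infty$ as a convergent centered series plus a deterministic constant), at the cost of the moment computations. One hairline point you should shore up: solving the second-moment relation for $v_k$ requires knowing a priori that $v_k<\infty$, since the one-step decomposition places $v_k$ on both sides of the equation and yields no information if $v_k=\infty$. This is easily supplied --- $\sigma_k$ is the absorption time of a continuous-time chain on the finite state space $\{1,\dots,k+1\}$ with a.s.\ absorption, hence has an exponential tail and finite moments of all orders --- after which your iteration of $v_k=\frac1\lambda v_{k-1}+O(k^{-2})$, valid because $\lambda>1$, does give $v_k=O(k^{-2})$ and the rest of the argument goes through.
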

\begin{proof}
It was shown at the end of the proof of Lemma~1 in \cite{LS09}
that $ \zeta(n)\to \zeta_\infty\ a.s. \text{ as }n\to\infty$
for some finite random variable $\zeta_\infty$.   
Since $N(t)\to\infty$ as $t\to\infty$ we also have 
$\zeta(N(t))\to \zeta_\infty$ a.s.\ as $t\to\infty$.
By definition, 
\begin{equation}\label{TN}
T_{N(t)}\le t <T_{N(t)+1}
\end{equation}
so
\[
\zeta_N(t) \le t - \dfrac{\log(N(t))}{\lambda-1}
\]
or
\[
\log(N(t)) - (\lambda-1)t \le -(\lambda-1)\zeta(N(t)) .
\]
This implies 
$N(t)e^{-(\lambda-1)t} \le e^{-(\lambda-1)\zeta(N(t))}$
and therefore
\[
\limsup_{t\to\infty}N(t)e^{-(\lambda-1)t} \le
e^{-(\lambda-1)\zeta_\infty}
\ a.s.
\]

To get an inequality in the reverse direction we note that \eqref{TN} implies
\[
t - \dfrac{\log(N(t)+1)}{\lambda-1} <\zeta(N(t)+1),
\]
or 
\[
\log(N(t)+1)-(\lambda-1)t > -(\lambda-1)\zeta(N(t)+1) .
\]
This implies 
$
(N(t)+1)e^{-(\lambda-1)t}>e^{-(\lambda-1)\zeta(N(t)+1)}
$
and therefore 
\[
\liminf_{t\to\infty}N(t)e^{-(\lambda-1)t} \ge
e^{-(\lambda-1)\zeta_\infty}
\ a.s.
\]
This completes the proof, since $\zeta_\infty$ is positive
and finite with probability one.
\end{proof}

When $r=0$ the maximal fitness $\phi_t$ is increasing in
$t$. This implies that for $s<t$, $a_t\ge t-s$ if and only if 
$\phi_s=\phi_t$. Let $S_n$ be the number of
  types produced up to time $T_n$.  By (1) and (2) in \cite{LS09}, 
\begin{equation}\label{LS12}
E\Big[ \dfrac{S_{N(s)}}{S_{N(t)+1}}, N(s)<N(t)
\Big] \le 
P(\phi_s = \phi_t , N(s)<N(t) ) \le 
E\Big[ \dfrac{S_{N(s)+1}}{S_{N(t)}}, N(s)<N(t)
\Big]. 
\end{equation}
Fix $u>0$ and let $s=t-u$. By Lemma~4,
$P(N(s)<N(t))\to 1$ as $t\to\infty$, so it suffices to prove
that both the left-side and right-side of \eqref{LS12}
converge to $e^{-(\lambda-1)u}$.

It was shown in
\cite{LS09} that $S_n/n$ converges a.s.\ to a finite
positive limit as $n\to\infty$. By this fact,
$N(t)\to\infty$, and Lemma~1,
\[
\dfrac{S_{N(s)+1}}{S_{N(t)}} = 
\dfrac{S_{N(s)+1}}{N(s)+1}
\dfrac{N(t)}{S_{N(t)}}
\dfrac{N(s)+1}{N(t)}
\to
e^{-(\lambda-1)u} \quad a.s.
\]
It follows that the right-side of \eqref{LS12} converges to
$e^{-(\lambda-1)u}$ as $t\to\infty$. A similar argument
handles the left-side of \eqref{LS12}. This completes the
proof of Theorem~2. 

\section{Proof of Theorem~3.}

Throughout this section $0<r\le 1$ and $0<\lambda<1$ are
fixed. We first extend the notation of Section~2 of \cite{LS09}
making the following definitions and observations. 

\begin{enumerate}[(1)]
\item Put $T_0=0$ and for $n\ge 1$ let
  $T_n$ be the time of the $n$th return of $X(t)$ to state 1. 
The ``interarrival times'' times $\{T_n-T_{n-1},n\ge 1\}$
are iid random variables. .
\item For $n\ge 1$ let $\xi_n$ be the duration of the $n$th
  sojourn time in state   1,
\[
\xi_{n} =
  \inf\{t>T_{n-1}: X_t\ne 1\} .
\]
The random variables $\{\xi_n,n\ge 1\}$
  are iid exponential with parameter $\lambda$. Note also that for $n\geq 0$ 
$\sigma(T_0,\dots,T_n)$ is 
  independent of $\sigma(\xi_{n+1},\xi_{n+2},\dots)$. 
\item For $n\ge 1$ let
  $u_n$ be the uniform random variable created at time
  $T_{n-1}+\xi_n$, when $X(t)$ jumps from 1 to 2. 
  At time $T_{n-1}+\xi_n$ there are two types, with
  fitnesses $\f(T_{n-1}), u_n$. 
  The $\{u_n,n\ge 1\}$ are iid uniform (0,1) rv's, independent of 
  the sequences $\{T_n,n\ge 0\}$ and $\{\xi_n,n\ge 1\}$.
\item For $n\ge 1$ let $\eta_{n}$ be the duration of the sojourn time 
in 2 starting at time $T_{n-1}+\xi_n$,
\[
\eta_n=\inf\{t>T_{n-1}+\xi_n: X_t\ne 2\}.
\] 
The random variables $\{\eta_n,n\ge 1\}$ are iid exponential
with parameter $2\lambda+2$, 
independent of $\{\xi_n,n\ge 1\}$ and $\{u_n,n\ge
1\}$. Furthermore, $\sigma(T_0,\dots,T_n)$ is 
  independent of $\sigma(\eta_{n+1},\eta_{n+2},\dots)$. 
\item For $n\ge 1$ let $T'_n=T_{n-1}+\xi_n+\eta_n$. 
 For all $t\in[T_{n-1}+\xi_n,T'_n)$   here are exactly two types,
  the fitnesses are $\f(T_{n-1}), u_n$. 
\item At time $T'_{n}-$, if $X(t)$ jumps to 1, with
  probability $r$ one of the types $u_n,\f(T_{n-1})$ 
  is chosen to be killed. For $n\ge 1$ let 
\[
\vep_{n}=\begin{cases}
1&\text{at time $T'_n-$, $X_t$ jumps to 1 and the type $\f(T_{n-1})$
  is killed by random killing}\\
0& \text{otherwise.}
\end{cases}
\]
Note that we do not include in the event $\{\vep_n=1\}$ the
possibility that $\f(T_{n-1})<u_n$ and the least fit type is
killed with probability $1-r$. 
The random variables $\{\vep_n,n\ge 1\}$ are iid Bernoulli
with mean 
\[
p=\frac{2}{2(1+\lambda)}\frac{r}2=\frac{r}{2(1+\lambda)}>0
\]
Also, the sequence $\{\vep_n,n\ge 1\}$
is independent of the sequence $\{u_n,n\ge 1\}$, and
$\sigma(T_k,\xi_k,\eta_k, k\le n)$ is independent of
$\sigma(\vep_{n+1},\vep_{n+2},\dots)$. 
\item To consider the return times $T_j$ corresponding to
  the event $\{\vep_n=1\}$, put $\kappa_0=0$,
  $R_0=0$, and for $n\ge 1$ define 
\[
\kappa_{n}= \inf\{k>\kappa_{n-1} : \vep_k=1\} \text{ and }
R_n=T_{\kappa_n} .
\]
The random variables $\{R_n-R_{n-1},n\ge 1\}$ are iid, with $\mu=ER_1\in(0,\infty]$
and at the times $R_n$, $n\ge 1$,
\begin{equation}\label{regenfitage}
\begin{aligned}
\f_{R_n}&=u_{\kappa_{n}} \text{ is uniform on }(0,1)\\
a_{R_n}&= \eta_{\kappa_n} \text{ is exponential with parameter }2(\lambda+1).
\end{aligned}
\end{equation}
\end{enumerate}
The construction is illustrated in Figure~1 below, in which
$\vep_1=0$, $\vep_2=1$ and $R_1=T_2$.

\bigskip
\begin{center}
\begin{tikzpicture}[scale=.75]\large
\draw[dotted,->] (0,0) -- (11,0);
\draw[dotted,->] (0,0) -- (0,5);
\draw (0,0) node[left] {$1$};
\draw (0,2) node[left] {$2$};
\draw (0,4) node[left] {$3$};
\draw (11,0) node[right] {$t$};
\draw[very thick] (0,0) -- (2,0);
\draw (0,-.1) -- (0,.1);
\draw (0,5.0) node[above] {$X(t)$};
\draw (0,-.5) node[below] {$T_0$};
\draw (0,-1.5) node[below] { $R_0$};
\draw (1,0) node[above] { $\xi_1$};
\draw[very thick] (2,2) -- (3,2);
\draw (2,-.1) -- (2,.1);
\draw (2.5,2) node[above] { $\eta_1$};
\draw[very thick] (3,4) -- (4.5,4);
\draw (3,-.1) -- (3,.1);
\draw[very thick] (4.5,2) -- (6.5,2);
\draw (4.5,-.1) -- (4.5,.1);
\draw[very thick] (6.5,0) -- (7.5,0);
\draw (6.5,-.1) -- (6.5,.1);
\draw (6.5,-.5) node[below] { $T_1$};
\draw (7,0) node[above] { $\xi_2$};
\draw[very thick] (7.5,2) -- (9,2);
\draw (7.5,-.1) -- (7.5,.1);
\draw (8.25,2) node[above] { $\eta_2$};
\draw[very thick] (9,0) -- (10,0);
\draw (9,-.1) -- (9,.1);
\draw (9,-.5) node[below] { $T_2$};
\draw (9,-1.5) node[below] { $R_1$};
\draw (9.5,0) node[above] { $\xi_3$};
\end{tikzpicture}\\
Figure 1
\end{center}

By \eqref{regenfitage}, at time $R_1$ there is a single
type, its fitness has the uniform distribution
on $(0,1)$, and its age has the exponential distribution with
parameter $2(\lambda+1)$. Furthermore, given this
information, the distribution of our process for $t\ge R_1$
is independent of what has happened before time $R_1$. It
follows that if we start at time $0$ with a single type
with fitness uniformly distributed on $(0,1)$ and age
exponentially distributed with parameter $2(\lambda+1)$ then
$R_1$ is a regeneration time. The strong 
Markov property now implies the following result.

\begin{lem} If $\f_0$ is uniformly distributed on
  $(0,1)$ and $a_0$ is exponentially distributed with parameter
  $2(\lambda+2)$ then for $t>0$, 
\begin{equation}\label{fitrenew}
P(\f_t\le v, R_1\le t) = \int_0^tP(R_1\in
  ds)P(\f_{t-s}\le v), \quad 0<v<1, 
\end{equation}
and
\begin{equation}\label{agerenew}
P(a_t\le x, R_1\le t) = \int_0^tP(R_1\in
  ds)P(a_{t-s}\le x), \quad x>0.
\end{equation}
\end{lem}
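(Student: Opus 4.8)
The plan is to derive both identities directly from the regeneration property of $R_1$ recorded in the discussion preceding the lemma. Since we start with $\f_0$ uniform on $(0,1)$ and $a_0$ exponential with the parameter appearing in \eqref{regenfitage}, the configuration at $R_1$---namely a single type of fitness $\f_{R_1}=u_{\kappa_1}$ and age $a_{R_1}=\eta_{\kappa_1}$, as given in \eqref{regenfitage}---has exactly the same joint law as the configuration at time $0$. The pair $(\f_t,a_t)$ is a deterministic function of the underlying Markov process (whose state, when $X(t)=1$, is just the fitness and age of the single surviving type), and $R_1$ is a stopping time for it. Thus the strong Markov property implies that, conditionally on $\cF_{R_1}$, the shifted process $\{(\f_{R_1+u},a_{R_1+u}):u\ge0\}$ is distributed as a fresh copy of the original process and is independent of $\cF_{R_1}$, hence of $R_1$.

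To prove \eqref{fitrenew}, I would condition on the value of $R_1$ and write
\begin{equation*}
P(\f_t\le v,\,R_1\le t)=\int_0^t P(R_1\in ds)\,P(\f_t\le v\mid R_1=s).
\end{equation*}
For $s\le t$ we have $\f_t=\f_{R_1+(t-s)}$ on $\{R_1=s\}$, and by the regeneration property of the previous paragraph the conditional law of $\f_{R_1+(t-s)}$ given $R_1=s$ equals the unconditional law of $\f_{t-s}$ in the original process. Hence $P(\f_t\le v\mid R_1=s)=P(\f_{t-s}\le v)$, and substituting into the display yields \eqref{fitrenew}. The identity \eqref{agerenew} follows by the identical argument with $a_t=a_{R_1+(t-s)}$ in place of $\f_t$.

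The one point requiring care is the rigorous application of the strong Markov property at $R_1$. Although $R_1=T_{\kappa_1}$ is indexed by the random quantity $\kappa_1$, it is a stopping time because $\{\kappa_1=k\}$ is determined by $\vep_1,\dots,\vep_k$; and the independence relations catalogued in items (1)--(7)---in particular that $\sigma(T_k,\xi_k,\eta_k,\,k\le n)$ is independent of $(\vep_{n+1},\vep_{n+2},\dots)$, and that the $u_n$ are independent of the $\vep_n$---are exactly what guarantee both that $R_1$ is a genuine regeneration time and that the killing event $\{\vep_{\kappa_1}=1\}$ replaces the old maximal type by the fresh uniform $u_{\kappa_1}$, so that the configuration at $R_1$ reproduces the initial law via \eqref{regenfitage}. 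Once this measurability and independence bookkeeping for the random index $\kappa_1$ is in hand, the two displayed identities are immediate, and I expect this verification to be the only nontrivial step.
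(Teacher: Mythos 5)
Your proposal is correct and follows essentially the same route as the paper: the paper likewise invokes \eqref{regenfitage} to see that the configuration at $R_1$ (a single type with uniform fitness and exponential age) reproduces the initial law, and then applies the strong Markov property at the regeneration time $R_1$ to obtain both renewal identities. Note only that the parameter $2(\lambda+2)$ in the lemma's statement is a typo for $2(\lambda+1)$ (as in \eqref{regenfitage} and the surrounding text), which your reading of the hypothesis implicitly and correctly fixes.
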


\begin{rem} The fitness process does not depend on
  the age process, so \eqref{fitrenew} holds regardless of
  the distribution of $a_0$.
\end{rem}

In order to make use of \eqref{fitrenew} and \eqref{agerenew}
we will need information on the tail of the distribution of
$R_1$, which is provided by our next result. 

\begin{lem} For $\lambda<1$ there are constants $C,\gamma$ such that
  $P(R_1>t)\le Ce^{-\gamma t}$. In particular, $E(R_1)<\infty$.
\end{lem}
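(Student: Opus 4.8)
The plan is to reduce the claim to a moment generating function bound for a single return time and then to assemble $R_1$ as a geometric-type sum. Writing $\tau_j = T_j - T_{j-1}$ for the $j$th return time of $X(t)$ to state $1$, we have $R_1 = T_{\kappa_1} = \sum_{j=1}^{\kappa_1}\tau_j$, where by (6)--(7) the pairs $(\tau_j,\vep_j)$ are iid and $\kappa_1 = \inf\{j\ge 1:\vep_j=1\}$. Note that $\tau_j$ and $\vep_j$ are dependent \emph{within} a single excursion (on $\{\vep_j=1\}$ the excursion is the short one $1\to2\to1$), so I will only exploit independence \emph{across} distinct indices $j$. The first and main step is to show that $\tau_1$ has a finite moment generating function in a neighborhood of the origin.

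To control $\tau_1$, decompose $\tau_1 = \xi_1 + E_1$, where $\xi_1\sim\mathrm{Exp}(\lambda)$ is the sojourn in state $1$ and $E_1$ is the time the excursion spends above $1$. Passing to the embedded jump chain, from any state $n\ge 2$ the chain moves up with probability $q=\lambda/(1+\lambda)$ and down with probability $1-q$; since $\lambda<1$ we have $q<1/2$, so the walk has a downward drift. Consequently the number $H$ of jumps in an excursion (the first-passage time from $2$ to $1$ of this biased walk) has exponentially decaying tails: its generating function $G(z)=E[z^H]$ solves $G = (1-q)z + qzG^2$ and is finite for $z<z^\ast = 1/(2\sqrt{q(1-q)})$, with $z^\ast>1$ because $q\ne 1/2$. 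Each holding time at a state $n\ge 2$ is exponential with rate $n(1+\lambda)\ge 2(1+\lambda)$, hence stochastically dominated by an $\mathrm{Exp}(2(1+\lambda))$ variable, so $E_1$ is stochastically dominated by $\sum_{i=1}^H W_i$ with $\{W_i\}$ iid $\mathrm{Exp}(2(1+\lambda))$ independent of $H$. With $\phi(s)=E[e^{sW_1}]$ finite and continuous near $0$ and $\phi(0)=1$, conditioning on $H$ gives $E[e^{sE_1}]\le G(\phi(s))$, which is finite once $s>0$ is small enough that $\phi(s)<z^\ast$. Multiplying by the mgf of the independent $\xi_1$ shows $E[e^{s\tau_1}]<\infty$ for some $s>0$.

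Finally I assemble $R_1$. Using independence of the pairs $(\tau_j,\vep_j)$ across $j$, conditioning on $\kappa_1=k$ and factoring the expectation gives
\[
E\big[e^{sR_1}\big] = \sum_{k\ge 1} A(s)^{k-1}B(s) = \frac{B(s)}{1-A(s)},
\]
where $A(s)=E[e^{s\tau_1}\mathbf{1}\{\vep_1=0\}]$ and $B(s)=E[e^{s\tau_1}\mathbf{1}\{\vep_1=1\}]$. At $s=0$ we have $A(0)=1-p<1$ with $p>0$ from (6), and by dominated convergence (using $E[e^{s\tau_1}]<\infty$) the function $A$ is continuous at $0$, so $A(s)<1$ and $B(s)<\infty$ for all sufficiently small $s>0$. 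Hence $E[e^{sR_1}]<\infty$ for such $s$, and Markov's inequality yields $P(R_1>t)\le e^{-st}E[e^{sR_1}] = Ce^{-\gamma t}$ with $\gamma=s$; in particular $E(R_1)<\infty$. The crux of the argument is the exponential tail of the excursion length $E_1$, which is exactly where the hypothesis $\lambda<1$ enters, through the downward drift $q<1/2$ of the embedded walk; this step is precisely what breaks down at $\lambda\ge 1$. The remaining ingredients—the stochastic domination of holding times and the generating-function identity above—are routine, the only subtlety being that one must invoke independence solely across distinct excursions.
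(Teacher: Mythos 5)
Your proof is correct, and it takes a genuinely different route from the paper's. Both arguments reduce the lemma to (i) an exponential tail for a single excursion of $X(t)$ away from state $1$, and (ii) compounding over the geometric number of excursions until the first $\vep_j=1$; but the two proofs implement both pieces differently. For (i), the paper never looks at the embedded walk: it couples with the chain started from $3$, adds a killing transition $1\to0$ at rate $1$ so that the amended chain becomes a continuous-time binary branching process, and reads the bound $P(X(t)\ne0\mid X(0)=1)\le e^{-(1-\lambda)t}$ off the explicit generating function in Athreya--Ney; subcriticality ($\lambda<1$) enters as the extinction rate $1-\lambda$. You instead use the downward drift $q=\lambda/(1+\lambda)<1/2$ of the embedded jump chain, the first-passage generating function $G=(1-q)z+qzG^2$ with singularity at $z^*=1/(2\sqrt{q(1-q)})>1$, and stochastic domination of the holding times by $\mathrm{Exp}(2(1+\lambda))$ variables; this is more elementary and self-contained, at the price of not producing an explicit rate. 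For (ii), the paper writes the defective renewal equation $P(\tilde R_1>t)=P(\tau'>t)+(1-p)\int_0^tP(\tau'\in ds)\,P(\tilde R_1>t-s)$ and invokes Gronwall's inequality to conclude $P(\tilde R_1>t)\le P(\tau'>t)e^{(1-p)P(\tau'\le t)}$; your moment generating function factorization $E[e^{sR_1}]=B(s)/(1-A(s))$, with $s$ chosen small enough that $A(s)<1$, is in fact the sounder treatment of this step. A bound of the form $P(R_1>t)\le C\,P(\tau'>t)$ cannot hold in general---a geometric($p$) sum of $\mathrm{Exp}(\gamma)$ variables is exactly $\mathrm{Exp}(p\gamma)$, whose tail exponent is strictly smaller than $\gamma$---and Gronwall's inequality does not apply as stated to a convolution kernel $\int_0^t f(t-s)\,dF(s)$; the correct conclusion is an exponential tail with a strictly smaller exponent, which is precisely what your condition $A(s)<1$ delivers. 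Finally, your care in invoking independence only across excursions (since $\tau_j$ and $\vep_j$ are dependent within a single excursion) is exactly the right reading of the dependence structure set up in items (1)--(7) of Section~3.
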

\begin{proof} We are going to use Gronwall's
  inequality. Let $\tilde X(t)$ denote $X(t)$ starting at 3 instead of 1, 
  let $\tilde T_1$ be the first time $\tilde X(t)$ reaches
  1, and let $\tilde R_1$ be
  defined analogously to $R_1$. By a simple coupling it is
  clear that $P(R_1>t)\le P(\tilde R_1>t)$ for all $t>0$.
Let $\tau$ be the first time $\tilde X(t)$
  reaches 2 after reaching 0,
\[
\tau = \inf\{t>\tilde T_1 : \tilde X(t)=2\},
\]
and let $\tilde \eta$ be an independent exponential random
variable with parameter $2\lambda+2$. Finally, let 
$\tau'=\tau+\tilde\eta$. 
By the Markov property, 
\[
P(\tilde R_1>t) = P(\tau'>t) +
(1-p)\int_0^tP(\tau'\in ds) P(\tilde
R_1>t-s) .
\]
It follows now from Gronwall's inequality that
\[
P(\tilde R_1>t) \le P(\tau'>t)e^{(1-p)P(\tau'\le t)}
\le e P(\tilde R_1>t).
\]
Since $\tau'=\tau+\tilde\eta$, it suffices now to prove that
$\tau$ has an exponential tail.

For the remainder of this argument we amend the dynamics of
$X(t)$ to include a transition from 1 to 0 at rate
$1$, and treat 0 as a trap. If we let $\tau_0$ be the first
hitting time of $0$, then $\tau_0>\tau$, so the final
reduction is to prove that for some constants $C,\gamma$,
\[
P(\tau_0>t | X(0)=3) = P(X(t)=0|X(0)=3)\le Ce^{-\gamma t} .
\]
The amended birth-death process $X(t)$ is a continuous
time branching process, as shown in Section~III.5 of \cite{AN},
where an explicit expression for the generating function
$\sum_{k=0}^\infty s^kP(X(t)=k|X(0)=1)$ is given. Setting
$s=0$ we obtain
\[
P(X(t)\ne 0|X(0)=1) = e^{-(1-\lambda
  )t}\dfrac{1-\lambda}{1-\lambda e^{-(1-\lambda)t}} \le 
e^{-(1-\lambda)t} .
\]
By the branching property, we get $P(X(t)\ne 0|X(0)=3) \le
3P(X(t)\ne 0|X(0)=1)$ so we are done. 
\end{proof}

With these facts established we begin the proof of part (a)
of Theorem~3. Let $F(t)=P(R_1\le t)$, and let 
$U=\sum   F^{(*n)}$ be the corresponding renewal
  function, $U(t) = \sum_n P(R_n\le t)$.
Fix $v\in(0,1)$ and define 
\[
h_v(t) = P(\f_t\le v, R_1>t) \text{ and }
H_v(t) = P(\f_t\le v) .
\]
By decomposing the event defining $H_v(t)$ according to the
value of $R_1$,  and using \eqref{fitrenew}, we have 
\begin{equation}\label{renewal1}
H_v(t) = h_v(t) + P(\f_t\le v, R_1\le t)
= h_v(t) + \int_0^t H_v(t-s) F(ds) .
\end{equation}
It follows from Theorem 4.4.4 of \cite{Dur} that the solution
to this renewal equation is given by
\begin{equation}\label{fitnewrep1}
H_v(t) = \int_0^t h_v(t-s)U(ds) .
\end{equation}
We claim that 
\begin{equation}\label{dri}
\text{$h_v(t)$ is directly Riemann integrable if $\lambda<1$.}
\end{equation}
Given this, a standard renewal theorem 
(Theorem 4.4.5 of \cite{Dur}) implies that 
\begin{equation}\label{fitlimit1}
H_v(t) \to \frac1\mu\int_0^\infty h_v(s) ds \text{ as }t\to\infty
\end{equation}
or 
\begin{equation}\label{fitrep}
\lim_{t\to\infty}P(\f_t\le v) =
\frac1\mu\int_0^\infty P(\f_s\le v, R_1>s) \, ds 
\end{equation}
(recall that $\mu=E(R_1)$). 
For $\lambda=1$ we still have \eqref{fitnewrep1}, but
  not \eqref{fitlimit1} since this depends on $\mu<\infty$.

In view of the fact that $P(R_1>t)$ decays exponentially
fast, to prove \eqref{dri} it suffices to prove that
$h_v(t)$ is a continuous function of $t$. For $s<t$ let 
$\Gamma_{s,t}$ be the event that the birth-death process
makes no transitions in the time interval $[s,t]$. On
$\Gamma_{s,t}$, $\f_\cdot$ cannot change, and $R_1>s$ if and only
if $R_1>t$, so that
\[
P(\{\f_s\le v, R_1>s\}\cap \Gamma_{s,t}) =
P(\{\f_t\le v, R_1>t\}\cap \Gamma_{s,t}). 
\]
It follows that 
\begin{align*}
|h_v(s)-h_v(t)| &\le P(\Gamma^c_{s,t}) \\
&= \sum_{k=1}^\infty
P(X(s)=k)(1-e^{-k(\lambda+1)(t-s)})\\
&\le 
\sum_{k=1}^\infty
P(X(s)=k)k(\lambda+1)(t-s)\\
&= (t-s)(\lambda+1)E(X(s)).
\end{align*}
For $\lambda<1$, $\sup_sE(X(s))<\infty$, so we have proved
that $h_v$ is continuous and directly Riemann integrable.

For Theorem~3(a), we suppose first that $a_0$ is exponential with
parameter $2(\lambda+1)$, so that \eqref{agerenew} holds. Now we
follow the previous argument. 
Fix $x>0$ and define 
\[
g_x(t) = P(a_t\le x, R_1>t) \text{ and }
G_x(t) = P(a_t\le x)
\]
As in the argument for Theorem~2(b), for $\lambda\le 1$ we have 
\begin{equation}\label{renewal2}
G_x(t) = g_x(t) + \int_0^t G_x(t-s)
F(ds) = \int_0^t g_x(t-s)U(ds) .
\end{equation}
For $\lambda<1$, an argument similar to the one for $h_v(t)$
shows that $g_x(t)$ is directly
Riemann integrable, and by the renewal theorem 
\begin{equation}\label{agelimit1}
G_x(t) \to \frac1\mu\int_0^\infty g_x(s) ds \text{ as }t\to\infty
\end{equation}
or 
\begin{equation}\label{agerep}
\lim_{t\to\infty}P(a_t\le x) =
\frac1\mu\int_0^\infty P(a_s\le x, R_1>s) \, ds .
\end{equation}

Given any $\tilde a_0\ge 0$, by using the same birth-death
process and sequence of uniform random variables, we may
construct an age process $\tilde a_t$ with the property that
\begin{equation}\label{aa}
\tilde a_t= a_t \text{ if }t\ge R_1 .
\end{equation}
This is because at time $R_1=T_k$ for some $k$, the most fit type is the
uniform random variable created at time $T_k+\xi_{k}$, and
has age $\eta_k=a_{R_1}=\tilde a_{R_1}$. After time $R_1$
the two age processes are identical. By \eqref{aa}, 
$P(a_t\ne\tilde a_t) \to 0$ as $t\to\infty$,
and therefore for any $\tilde a_0$,
\begin{equation}\label{agerep2}
\lim_{t\to\infty}P(\tilde a_t\le x) =  \dfrac1\mu\int_0^\infty \tilde g_x(s)ds.
\end{equation}

Finally, it is not hard to see that the right-hand side of
\eqref{fitrep} is strictly increasing in $v$, and the
right-hand side of \eqref{agerep} is strictly increasing in
$x$, so the limit distributions are nondegenerate.

\section{Proof of Theorem ~4.}
We start with the case $r=1$. 
In this case, conditional on $X_t=k$, the set of
fitnesses has the same law as that of $k$ uniform $(0,1)$
random variables, and hence
\begin{equation}\label{indepen}
P(\f_t^1 \le u|X_t =k) = u^k, \quad 0<u<1.
\end{equation}
This is because (i) the sequence of uniforms created when
$X_t$ jumps is independent of $X_t$, (ii) when $r=1$, the
type that is killed is independent of the types that are
present, and (iii) $k$ uniforms chosen randomly from $n\geq k$ iid
uniforms has the law of $k$ iid uniforms.  For $\lambda\ge
1$, $P(X_t\le K)\to 0$ as $t\to\infty$ for any
$K<\infty$. Applying \eqref{indepen} we obtain
\begin{equation}\label{convto1}
\f_t^1 \to_p 1 \text{ as }t\to\infty .
\end{equation}

To handle $\f^r_t$ for $0<r<1$ we argue that $\f^r_t$ is
stochastically larger than $\f^1_t$. To do this we will use
a coupling that is based on the following definition and
elementary lemma. For positive integers $k$ and sets
$A,B\subset(0,1)$ such that $|A|=|B|=k$, write $A\preceq B$
if $A$ has elements $a_1<\cdots<a_k$ and $B$ has elements
$b_1<\cdots<b_k$ and
\begin{equation}\label{aborder}
a_i\le b_i \text{ for }1\le i\le k.
\end{equation}

\begin{lem}\label{lem:order} Let $A,B\subset(0,1)$ each have $k$
  elements, and suppose $A\preceq B$. Then $A'\preceq B'$ in each of the
  two cases:
\begin{enumerate}[(a)]
\item $A'=A\cup\{w\}$ and $B'=B\cup\{w\}$, where $w\in
  (0,1)$ and $w\notin A\cup B$. 
\item $k\ge 2$, $A'$ is obtained by deleting any
  element of $A$ and $B'$ is obtained by deleting the
  smallest element of $B$. 
\end{enumerate}
In particular, $\max(B')\ge \max(A')$. 
\end{lem}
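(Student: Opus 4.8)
The plan is to reduce the order $\preceq$ to a single counting characterization and then check that each of the two operations respects it threshold by threshold. Concretely, for equal-size subsets of $(0,1)$ I would first establish the equivalence
\[
A\preceq B \quad\iff\quad |A\cap(x,1)|\le |B\cap(x,1)| \text{ for every } x,
\]
where $|\cdot|$ denotes cardinality. The forward implication is immediate: if $b_i\le x$ then $a_i\le b_i\le x$, so at least as many elements of $A$ as of $B$ lie in $(0,x]$, i.e.\ $|A\cap(x,1)|\le|B\cap(x,1)|$. For the reverse implication, evaluating the count inequality at $x=b_i$ gives $|A\cap(b_i,1)|\le|B\cap(b_i,1)|=k-i$, so at least $i$ elements of $A$ are $\le b_i$, which is exactly $a_i\le b_i$. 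This characterization is the right tool because both set operations in the lemma alter the counts $|A\cap(x,1)|$ in a transparent, additive way.

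With this in hand, part (a) is essentially immediate. Adjoining the same element $w$ to both sets changes each count by the identical indicator $\mathbf 1\{w>x\}$, so the inequality $|A\cap(x,1)|\le|B\cap(x,1)|$ is preserved for every $x$, and hence $A'\preceq B'$.

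For part (b), write $a_m$ for the deleted element of $A$ and recall that $b_1=\min B$ is the element deleted from $B$. For each $x$ I would compare the identities
\[
|A'\cap(x,1)| = |A\cap(x,1)|-\mathbf 1\{a_m>x\}, \qquad
|B'\cap(x,1)| = |B\cap(x,1)|-\mathbf 1\{b_1>x\},
\]
and verify the desired inequality in the cases $b_1\le x$ and $b_1>x$. When $b_1\le x$ the $B$-side is unchanged and the $A$-side can only decrease, so the inequality survives. The case $b_1>x$ is the one genuine obstacle: if also $a_m\le x$, then we subtract $1$ from the $B$-side but $0$ from the $A$-side, which threatens to reverse domination. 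Here I would use crucially that $b_1$ is the \emph{smallest} element of $B$: $b_1>x$ forces all $k$ elements of $B$ to exceed $x$, so $|B\cap(x,1)|=k$, while $a_m\le x$ forces $|A\cap(x,1)|\le k-1$; hence $|A'\cap(x,1)|\le k-1=|B'\cap(x,1)|$. This is precisely why the lemma insists on removing the minimal element of $B$—deleting any larger element could destroy the domination.

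Finally, the ``in particular'' clause follows from $A'\preceq B'$ alone: since $A'$ and $B'$ have equal cardinality, comparing the top-indexed entries yields $\max(A')\le\max(B')$. I expect the only delicate points to be the reverse direction of the counting characterization and the boundary case $b_1>x\ge a_m$ in part (b); the remainder is routine bookkeeping.
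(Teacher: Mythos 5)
Your proof is correct, but it takes a genuinely different route from the paper. The paper argues directly on sorted indices: for (a) it locates the insertion positions of $w$ in the two ordered lists (position $i+1$ in $A$, position $j+1$ in $B$, with $j\le i$), writes out the shifted sequences $a'_\ell,b'_\ell$ explicitly, and checks $a'_\ell\le b'_\ell$ entry by entry; for (b) it does the same after deletion, where removing the smallest element of $B$ shifts every index of $B$ down by one while only the indices of $A$ above the deleted element shift. Your approach instead reduces $\preceq$ to the tail-count characterization $|A\cap(x,1)|\le|B\cap(x,1)|$ for all $x$ (the standard reformulation underlying stochastic domination), after which both operations act additively on the counts and the verification splits into transparent cases on $x$; the only real content is the reverse implication of the equivalence and the boundary case $b_1>x\ge a_m$, both of which you handle correctly. (In the remaining subcase $b_1>x$, $a_m>x$ both counts drop by one, which you treat as routine; that is fine.) What each buys: the paper's argument is self-contained and needs no auxiliary lemma, but the index bookkeeping is fiddly and error-prone (indeed the paper's displayed formulas contain small slips, e.g.\ $b'_\ell$ written in terms of $a_{\ell-1}$ in part (a)); your counting reformulation front-loads the work into one clean equivalence, after which (a) and (b) are nearly mechanical, and it generalizes immediately to sets of unequal size or to comparisons of empirical distribution functions, at the modest cost of proving the equivalence itself. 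The ``in particular'' clause is handled identically in both: compare the top-indexed entries.
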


%\note{Should we omit this proof?}

\begin{proof} For (a), put $a_0=b_0=0$ and
  $a_{k+1}=b_{k+1}=1$. Then for some $0\le i\le k$ and $0\le
  j\le k$, $w\in(a_i,a_{i+1})\cap(b_j,b_{j+1})$, where
  necessarily $j\le i$. Then
\[
a'_\ell=\begin{cases}
a_\ell &\text{if } \ell \le i,\\
w &\text{if } \ell = i+1,\\
a_{\ell-1} &\text{if } \ell \ge i+2,
\end{cases}
\qquad
b'_\ell=\begin{cases}
b_\ell &\text{if } \ell \le j,\\
w &\text{if } \ell = j+1,\\
a_{\ell-1} &\text{if } \ell \ge j+2.
\end{cases}
\] 
It is easy to check that $a'_\ell\le b'_\ell$ for all
$\ell$.

For (b), if $a_i$ is the element deleted from $A$, then
$a'_\ell=a_i$ if $\ell<i$ and $a'_\ell=a_{\ell+1}$ if
$\ell>i$, while $b'_\ell=b_{\ell+1}$ for $\ell\ge 2$. Again,
it is easy to check that $a'_\ell\le b'_\ell$ for each $\ell$.
\end{proof}

Fix $0<r<1$. To be very clear about the coupling we need we
note that our system can be constructed from (i) the
birth-death process $X_t,t\ge 0$, (ii) an iid sequence of
uniform $(0,1)$ random variables $v_n,n\ge 0$, (iii) a
sequence of iid mean $r$ Bernoulli random variables
$\vep_n,n\ge 0$, and (iv) independent random variables
$W^n_k, n,k\ge 1$, $P(W^n_k=j)=1/k$ for $1\le j\le k$. When
$X_t$ makes its $n$th transition up the uniform variable
$v_n$ is added to the current set of types. If $X_t$ makes
it's $n$th transition down, and there are $k$ types before
the transition, the least fit type is deleted if $\vep_n=0$
while if $\vep_n=1$ and $W^n_k=j$ then the $j$th largest
type is deleted. This gives a construction of 
a set of types at time
$t$,$F^r(t)=\{f^r_1(t),\dots,f^r_{X(t)}(t)\}$, with 
$\f^r_t=\max(F^r(t))$. 

Using the same collection of variables we may construct a
second set of types $F^1(t)=\{f^1_1(t),\dots,
f^1_{X(t)}(t)\}$ as follows. Put $F^1(0)=F^r(0)$, so
certainly $F^1(0)\preceq F^r(0)$. Now suppose $F^1(t)\preceq
F^r(t)$ and the elements of each set are put in increasing
order.  If a jump up occurs for the birth process, and $w$
is the value of the uniform random variable added to
$F^r(t)$ is is also added to $F^1(t)$, preserving the
$\preceq$ relationship by Lemma~\ref{lem:order}. If a jump
down occurs, and the appropriate $\vep_n=1$ and $W^n_k=j$,
the $j$th largest element of each set is deleted. If
$\vep_n=0$, the $j$ largest element of $F^1(t)$ is still
deleted, while the smallest element of $F^r(t)$ is
deleted. Again by Lemma~\ref{lem:order}, the $\preceq$
relationship is preserved. Furthermore, this gives a
construction of the fitness process when $r=1$, i.e., the law
of $\max\{F^1(t)\}, t\ge 0 $ is the same as that of
$\f^1_t,t\ge 0$.

This gives a construction with $\phi^r(t)\geq \phi^1(t), t\ge
0$. In view of \eqref{convto1} this proves $\phi^r_t\to_p1$
as $t\to\infty$.

\end{document}